\newtheorem{lemma}{Lemma}
\newtheorem{corollary}[lemma]{Corollary}
\newtheorem{theorem}[lemma]{Theorem}
\newtheorem{proposition}[lemma]{Proposition}
\newcommand{\meet}{\wedge}
\newcommand{\join}{\vee}
\newcommand{\fracp}[1]{\{ #1 \}}
\newcommand{\journal}[6]{{\sc #1,} #2, {\it #3} {\bf #4} (#5), #6.}
\newcommand{\preprint}[3]{{\sc #1,} #2, preprint #3.}
\begin{document}
\title{A geometric approach to acyclic orientations}
\author{Richard Ehrenborg and Michael Slone}

\date{}

\maketitle

\begin{abstract}
The set of acyclic orientations of a connected graph with a given sink
has a natural poset structure. We give a geometric proof of a result
of Jim Propp: this poset is the disjoint union of distributive lattices.
\end{abstract}

Let $G$ be a connected graph on the vertex set
$[\underline{n}] = \{0\} \cup [n]$,
where $[n]$ denotes the set $\{1, \ldots, n\}$.
Let $P$ denote the collection of acyclic orientations of $G$, and 
let $P_{0}$ denote the collection of acyclic orientations of $G$
with $0$ as a sink.
If $\Omega$ is an orientation in $P$ with the vertex $i$
as a source, we can 
obtain a new orientation $\Omega'$ with $i$ as a sink by \emph{firing} the
vertex~$i$,
reorienting all the edges adjacent to $i$ towards $i$.
The orientations $\Omega$ and~$\Omega'$ agree away from $i$.

A \emph{firing sequence} from $\Omega$ to $\Omega'$
in $P$ consists of a sequence
$\Omega=\Omega_1,\dots,\Omega_{m+1}=\Omega'$ of orientations and a
function $F : [m] \longrightarrow [\underline{n}]$
such that for each $i\in [m]$,
the  orientation~$\Omega_{i+1}$ is obtained from $\Omega_i$
by firing the vertex $F(i)$.
We will abuse language by calling $F$ itself a firing sequence.
We make $P$ into a preorder by writing $\Omega\le\Omega'$ if and only if
there is a firing sequence from $\Omega$ to $\Omega'$.  From
the definition it is clear that $P$
is reflexive and transitive.
While $P$ is only a preorder, $P_{0}$ is a poset.  By finiteness, 
antisymmetry can be verified by showing that firing sequences in $P_{0}$
cannot be arbitrarily long.  This is a consequence of the fact that
neighbors of the distinguished sink $0$ cannot fire.
The proof depends on the
following lemma.

\begin{lemma}
Let $F : [m] \longrightarrow [n]$
be a firing sequence for the graph $G$.
If $i$ and $j$
are adjacent vertices in~$G$, then
$$
  |F^{-1}(i)| \le |F^{-1}(j)| + 1.
$$
\end{lemma}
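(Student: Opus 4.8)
The plan is to fix the edge $e$ joining $i$ and $j$ and to follow the orientation of $e$ through the sequence of orientations $\Omega=\Omega_1,\dots,\Omega_{m+1}=\Omega'$ underlying the firing sequence $F$. The only steps that change the orientation of $e$ are firings of $i$ or of $j$: firing a vertex other than $i$ or $j$ leaves $e$ alone, firing $i$ orients $e$ from $j$ toward $i$, and firing $j$ orients $e$ from $i$ toward $j$. Moreover, because a vertex is fired only when it is a source, at a step where $i$ is fired the edge $e$ must already be oriented from $i$ toward $j$, and at a step where $j$ is fired it must be oriented from $j$ toward $i$.

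From these observations I would deduce that between any two consecutive occurrences of the value $i$ in $F$ there is at least one occurrence of the value $j$. Indeed, right after $i$ is fired the edge $e$ points toward $i$; the next time $i$ is fired, $e$ must point toward $j$; since $i$ does not fire in between, the only way $e$ can have been reoriented from pointing toward $i$ to pointing toward $j$ is for $j$ to have fired at least once. Symmetrically, between any two consecutive occurrences of $j$ in $F$ there is at least one occurrence of $i$. Restricting $F$ to the preimage $F^{-1}(\{i,j\})$ therefore yields a word in the two symbols $i$ and $j$ with no two equal symbols adjacent, that is, an alternating word; the numbers of $i$'s and of $j$'s in such a word differ by at most one, which gives $|F^{-1}(i)| \le |F^{-1}(j)| + 1$.

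There is no genuinely hard step here; the content is entirely in the bookkeeping of the orientation of the single edge $e$. The one point to be slightly careful about is the degenerate situation in which one of the two vertices never fires — for instance, if $j$ never fires, then after $i$ fires once the edge $e$ is permanently oriented toward $i$, so $i$ cannot fire again and $|F^{-1}(i)| \le 1$ — but this case is automatically covered once the alternation statement is phrased in terms of occurrences in the full sequence $F$. It is probably cleanest to write the orientation of $e$ explicitly as a function of the time step and verify the separation property directly, rather than invoking a separate named fact about alternating words.
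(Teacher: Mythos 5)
Your argument is correct and is essentially the paper's own proof, just written out in more detail: the paper likewise observes that $i$ can fire only when the edge points toward $j$, that firing $i$ reverses this, and that only a firing of $j$ can reverse it back, which is exactly your alternation claim. No substantive difference.
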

\begin{proof}
A vertex can fire only if it is a source.
Firing the vertex $i$ reverses the orientation
of its edge to the vertex~$j$.  Hence the vertex~$i$
cannot fire again until the orientation is again
reversed, which can only happen by firing $j$.
\end{proof}

As a corollary, firing sequences have bounded length, implying that $P_{0}$ 
is a poset.

\begin{corollary}
The preorder $P_{0}$ of acyclic orientations with
a distinguished sink is a poset.
\end{corollary}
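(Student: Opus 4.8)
The plan is to notice that only antisymmetry is at issue — reflexivity and transitivity of $P_0$ come for free from the empty firing sequence and from concatenation of firing sequences — and then to extract antisymmetry from a uniform bound on the length of firing sequences that remain inside $P_0$. For the reduction, suppose $\Omega,\Omega' \in P_0$ with $\Omega \le \Omega'$ and $\Omega' \le \Omega$ but $\Omega \ne \Omega'$. Choose firing sequences in $P_0$ realizing the two relations, of lengths $a$ and $b$; since $\Omega \ne \Omega'$ we have $a \ge 1$. Alternating the two sequences and repeating $k$ times produces, for every $k \ge 1$, a firing sequence in $P_0$ from $\Omega$ back to $\Omega$ of length $k(a+b)$, which is unbounded. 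So it is enough to show that every firing sequence all of whose orientations lie in $P_0$ has length at most a constant depending only on $G$.

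To obtain that bound I would first determine which vertices can fire along such a sequence. The vertex $0$ is a sink in every orientation of the sequence, hence never a source, hence never fired. Crucially, no neighbor $j$ of $0$ is fired either: firing $j$ reorients the edge between $0$ and $j$ so that it points out of $0$, which would destroy the property that $0$ is a sink and so take us out of $P_0$. Thus, writing $F$ for the firing function, $|F^{-1}(v)| = 0$ whenever $v = 0$ or $v$ is adjacent to $0$. I would then apply the Lemma along a shortest path $0 = v_0, v_1, \dots, v_d = i$ from $0$ to an arbitrary vertex $i$: the Lemma gives $|F^{-1}(v_k)| \le |F^{-1}(v_{k-1})| + 1$, and starting from $|F^{-1}(v_1)| = 0$ an easy induction yields $|F^{-1}(i)| \le \operatorname{dist}_G(i,0) - 1$. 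Summing over all vertices bounds the length of the firing sequence by $\sum_{i=1}^{n}\bigl(\operatorname{dist}_G(i,0)-1\bigr)$, a quantity determined by $G$ and the choice of sink. This is exactly the bound the reduction requires, so $P_0$ is antisymmetric and hence a poset.

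The one genuinely delicate point, I expect, is the structural claim that a firing sequence confined to $P_0$ never fires a vertex in the closed neighborhood of the distinguished sink; once that is in place, propagating the Lemma's inequality along geodesics and turning the resulting length bound into antisymmetry are both routine. It is also worth separately noting the degenerate case $n = 0$, where $G$ is a single vertex and $P_0$ is a one-element poset; for $n \ge 1$, connectivity guarantees that $0$ has a neighbor, which is what lets us conclude from ``$0$ is a sink'' that ``$0$ is not a source.''
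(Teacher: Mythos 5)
Your proof is correct and follows essentially the same route as the paper: you bound $|F^{-1}(i)|$ by $d(0,i)-1$ by iterating the Lemma along geodesics from the base case that neighbors of the sink $0$ never fire, sum to bound the length of firing sequences, and deduce antisymmetry. You merely make explicit two steps the paper leaves implicit (why firing a neighbor of $0$ leaves $P_0$, and how a length bound forces antisymmetry), which is fine.
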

\begin{proof}
Let $F : [m] \longrightarrow [n]$ be a firing sequence.  By 
iterating the lemma, $|F^{-1}(i)| \le d(0, i) - 1$, so
$$
  m =   \sum_{i\in [n]} |F^{-1}(i)| 
    \le \sum_{i\in [n]} (d(0, i) - 1) .
$$
Hence firing sequences cannot be arbitrarily long, implying that $P_{0}$ is
antisymmetric.
\end{proof}

For a real number $a$, let
$\lfloor a \rfloor$ denote the largest integer
less than or equal to $a$. 
Similarly,
let
$\lceil a \rceil$ denote the least integer
greater than or equal to $a$. 
Finally, let $\fracp{a}$ denote the fractional part of the real number $a$,
that is, $\fracp{a} = a - \lfloor a \rfloor$.
(It will be clear from the context if $\fracp{a}$ denotes the fractional
part or the singleton set.)
Observe that the range of the function $x \longmapsto \fracp{x}$
is the half open interval~$[0,1)$.

Let $\widetilde{\mathcal{H}} = \widetilde{\mathcal{H}}(G)$ be
the \emph{periodic graphic arrangement} of the graph $G$,
that is, 
$\widetilde{\mathcal{H}}$ is the collection of all hyperplanes
of the form
$$
     x_{i} = x_{j} + k  , 
$$
where $ij$ is an edge in the graph $G$ and $k$ is an integer.
This hyperplane arrangement 
cuts~$\mathbb{R}^{n+1}$ into open regions.
Note that each region is translation-invariant
in the direction $(1, \ldots, 1)$.
Let $C$ denote the complement of $\widetilde{\mathcal{H}}$,
that is,
$$
    C
  =
    \mathbb{R}^{n+1} \setminus \bigcup_{H \in \widetilde{\mathcal{H}}} H  .
$$
Define a map $\varphi : C \longrightarrow P$
from the complement of the
periodic graphic arrangement to the
preorder of acyclic orientations as follows.
For a point $x = (x_{0}, \ldots, x_{n})$ and an edge~$ij$
observe that $\fracp{x_{i}} \neq \fracp{x_{j}}$
since the point does not lie on any hyperplane
of the form $x_{i} = x_{j} + k$.
Hence orient the edge~$ij$ towards $i$ if $\fracp{x_{i}} < \fracp{x_{j}}$
and towards~$j$ if the inequality is reversed.
This defines the orientation $\varphi(x)$.  Also note that this
is an acyclic orientation, since no directed cycles can occur.

Let $H_{0}$ be the coordinate hyperplane
$\{x \in \mathbb{R}^{n+1} \: : \: x_{0} = 0\}$.
The map $\varphi$ sends points of the intersection $C_0 = C \cap H_{0}$
to acyclic orientations in $P_{0}$.

The real line $\mathbb{R}$ is a distributive lattice; meet is
minimum and join is maximum.  Since~$\mathbb{R}^{n+1}$ is a 
product of copies of $\mathbb{R}$, it is also a distributive
lattice, with meet and join given by componentwise minimum
and maximum.  That is,
given two points 
in~$\mathbb{R}^n$, say 
$x = (x_{0}, \ldots, x_{n})$
and
$y = (y_{0}, \ldots, y_{n})$,
their meet and join are given by
$$
  x \meet y = (\min(x_{0},y_{0}), \ldots, \min(x_{n},y_{n}))
$$
and
$$
  x \join y = (\max(x_{0},y_{0}), \ldots, \max(x_{n},y_{n}))
$$
respectively.

\begin{lemma}
Each region $R$ in the complement $C$ of the periodic graphic arrangement
$\widetilde{\mathcal{H}}$
is a distributive sublattice of $\mathbb{R}^{n+1}$.
Hence the intersection $R \cap H_{0}$, which is a region in $C_{0}$,
is also a distributive sublattice of $\mathbb{R}^{n+1}$.
\end{lemma}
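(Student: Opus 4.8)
The plan is to describe the regions of $C$ by explicit inequalities, to verify closure under $\meet$ and $\join$ via an elementary estimate, and then to intersect with $H_{0}$.

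First I would observe that a region $R$ of $\widetilde{\mathcal{H}}$ is exactly a set of the form
$$
  R = \{ x \in \mathbb{R}^{n+1} \: : \: k_{ij} < x_{i} - x_{j} < k_{ij} + 1 \text{ for every edge } ij \text{ of } G \} ,
$$
where $(k_{ij})$ is a suitable family of integers indexed by the edges of $G$. Indeed, along any connected subset of $C$ the continuous function $x \longmapsto x_{i} - x_{j}$ omits every integer value, hence remains in a single open unit interval $(k_{ij}, k_{ij}+1)$; conversely, any nonempty set of the displayed form is a finite intersection of open half-spaces, hence convex and therefore connected, and it is contained in $C$, so it is a full region.

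The heart of the matter is the following elementary claim: if $a, b, c, d$ are real numbers and $k$ is an integer with $k < a - b < k+1$ and $k < c - d < k+1$, then also
$$
  k < \min(a,c) - \min(b,d) < k+1
  \qquad\text{and}\qquad
  k < \max(a,c) - \max(b,d) < k+1 .
$$
This is proved by a short case analysis according to which of $a$ and $c$ is smaller and which of $b$ and $d$ is smaller: in the two \emph{crossed} cases one combines both hypothesized inequalities, while in the remaining cases the relevant difference is already one of $a - b$ or $c - d$. Applying the claim coordinatewise, with $a = x_{i}$, $b = x_{j}$, $c = y_{i}$, $d = y_{j}$ for each edge $ij$, shows that $x \meet y$ and $x \join y$ lie in $R$ whenever $x$ and $y$ do. Hence $R$ is a sublattice of $\mathbb{R}^{n+1}$, and a sublattice of a distributive lattice is distributive.

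Finally, the coordinate hyperplane $H_{0} = \{x \in \mathbb{R}^{n+1} \: : \: x_{0} = 0\}$ is itself a sublattice of $\mathbb{R}^{n+1}$, since $\min(0,0) = \max(0,0) = 0$, and an intersection of two sublattices is again a sublattice, which again inherits distributivity. Therefore $R \cap H_{0}$ is a distributive sublattice of $\mathbb{R}^{n+1}$, and as observed above it is precisely a region of $C_{0}$. I do not expect any serious obstacle beyond the bookkeeping in the case analysis for the key inequality, which is the one place where the structure of the periodic arrangement actually enters the argument.
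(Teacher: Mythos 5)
Your proof is correct and follows essentially the same route as the paper: both decompose the region into one ``slice'' condition per edge (that $x_i - x_j$ lies in a fixed open unit interval) and verify closure of each slice under componentwise $\min$ and $\max$, your case analysis being equivalent to the paper's chain of inequalities $\min(x_i,y_i)+k < \min(x_j,y_j) < \min(x_i,y_i)+k+1$. Your justification that such intersections of slices are exactly the regions, and the explicit remark that $H_0$ is itself a sublattice, are slightly more detailed than the paper but not a different argument.
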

\begin{proof}
Since each region $R$ is the intersection of 
slices of the form
$$
    T
  =
    \{ x \in \mathbb{R} \:\: : \:\: x_i + k < x_j < x_i + k + 1 \},
$$
it is enough to prove that each slice is a sublattice of $\mathbb{R}^{n+1}$.
Let $x$ and $y$ be two points in the slice $T$.
Then
$\min(x_i,y_i) + k   =    \min(x_i + k, y_i + k)
                     <    \min(x_j, y_j)
                     <    \min(x_i + k+1, y_i + k+1)
                     =    \min(x_i,y_i) + k+1$,
implying that $x \meet y$ also lies in the slice $T$.
A dual argument shows that the slice $T$ is closed under the join operation.
Thus the region $R$ is a sublattice.
Since distributivity is preserved under taking sublattices,
it follows that $R$ is a distributive sublattice of $\mathbb{R}^{n+1}$.
\end{proof}

In the remainder of this paper we let $R$ be a region
in $C_{0}$.

\begin{lemma}
Consider the restriction $\varphi|_{R}$ of the map $\varphi$
to the region~$R$.
The inverse image of an acyclic orientation in $P_{0}$ is
of the form:
$$
  R \cap \left(\{0\} \times \prod_{i=1}^{n} [a_{i},a_{i}+1) \right), 
$$
where each $a_{i}$ is an integer. That is, the inverse image
of an orientation is the intersection of the region
$R$ with a half-open lattice
cube.
Hence the inverse image is a sublattice of~$\mathbb{R}^{n+1}$.
\label{lemma_inverse_image}
\end{lemma}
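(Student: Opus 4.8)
The plan is to show that, once the region $R$ is fixed, the orientation $\varphi(x)$ of a point $x \in R$ is governed entirely by the integer parts $\lfloor x_i \rfloor$, and then to invoke the connectivity of $G$ to pin those integer parts down. First I would record the elementary bookkeeping identity relating fractional parts to integer parts: for any $x \in C$ and any edge $ij$,
$$
   \lfloor x_i \rfloor - \lfloor x_j \rfloor - \lfloor x_i - x_j \rfloor \in \{0,1\} ,
$$
and this quantity equals $0$ precisely when $\fracp{x_i} > \fracp{x_j}$ and equals $1$ precisely when $\fracp{x_i} < \fracp{x_j}$. This is immediate from writing $x_i - x_j = (\lfloor x_i \rfloor - \lfloor x_j \rfloor) + (\fracp{x_i} - \fracp{x_j})$ and noting $\fracp{x_i} - \fracp{x_j} \in (-1,1)$. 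On the region $R$ the difference $x_i - x_j$ never meets an integer, since no hyperplane $x_i = x_j + k$ is crossed, so $\lfloor x_i - x_j \rfloor$ is a constant $c_{ij}$ on $R$. Consequently the edge $ij$ is oriented towards $j$ in $\varphi(x)$ exactly when $\lfloor x_i \rfloor - \lfloor x_j \rfloor = c_{ij}$, and towards $i$ exactly when $\lfloor x_i \rfloor - \lfloor x_j \rfloor = c_{ij} + 1$.

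Now fix an orientation $\Omega$ in the image of $\varphi|_R$ and a reference point $p$ with $\varphi(p) = \Omega$; set $a_i = \lfloor p_i \rfloor$ for $i \in [n]$ and $a_0 = 0$. On the preimage $(\varphi|_R)^{-1}(\Omega)$ the value $\lfloor x_i \rfloor - \lfloor x_j \rfloor$ is forced, for each edge $ij$, to be whichever of $c_{ij}$, $c_{ij}+1$ is dictated by the orientation of $ij$ in $\Omega$; in particular it is independent of the point chosen. Since $G$ is connected and $\lfloor x_0 \rfloor = 0$ on $H_0$, induction along a path from $0$ to $i$ shows that $\lfloor x_i \rfloor$ is the same for every point of the preimage, namely $a_i$. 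Hence $(\varphi|_R)^{-1}(\Omega) \subseteq R \cap (\{0\} \times \prod_{i=1}^{n} [a_i, a_i+1))$. If the preimage is empty one instead chooses the $a_i$ so that the cube is disjoint from $R$, which is possible because $R$ is bounded.

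Conversely, I would check that every $x \in R$ with $\lfloor x_i \rfloor = a_i$ for all $i$ satisfies $\varphi(x) = \Omega$: for each edge $ij$ the test deciding the orientation returns the same answer as for the reference point $p$, since $\lfloor x_i \rfloor - \lfloor x_j \rfloor = a_i - a_j = \lfloor p_i \rfloor - \lfloor p_j \rfloor$, so the two orientations agree on every edge. This yields the reverse inclusion and hence the displayed description of the preimage. Finally, each interval $[a_i, a_i+1)$ is closed under binary minimum and maximum, so the half-open lattice cube $\{0\} \times \prod_{i=1}^{n} [a_i, a_i+1)$ is a sublattice of $\mathbb{R}^{n+1}$; intersecting it with $R$, which is a sublattice by the preceding lemma, and using that an intersection of sublattices is a sublattice, gives the last sentence of the statement.

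The main obstacle is the first step: correctly teasing apart the fractional-part comparison defining $\varphi$ into a statement about integer-part differences that is constant on $R$. Once that identity is in hand, the connectivity argument and the sublattice conclusion are routine.
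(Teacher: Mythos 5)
Your proof is correct and follows essentially the same route as the paper: within a fixed region the quantity $\lfloor x_i - x_j\rfloor$ is a constant $c_{ij}$ (the paper's integer $k$), so the fractional-part comparison defining $\varphi$ reduces to a statement about the integer parts $\lfloor x_i\rfloor$, and connectivity together with $x_0 = 0$ then pins the floors down. Your write-up is in fact slightly more complete, since you also verify the reverse inclusion (every point of $R$ in the cube maps to $\Omega$) and dispose of the empty-preimage case, both of which the paper leaves implicit.
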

\begin{proof}
Assume that $x$ and $y$ lie in the region $R$.
Define the integers $a_{i}$ and $b_{i}$ by
$a_{i} = \lfloor x_{i} \rfloor$
and
$b_{i} = \lfloor y_{i} \rfloor$.
Hence the coordinate $x_{i}$ lies in the half-open
interval $[a_{i},a_{i}+1)$
and 
the coordinate $y_{i}$ lies in the half-open
interval $[b_{i},b_{i}+1)$.
Lastly, assume that $\varphi|_{R}$ maps $x$ and~$y$ to the
same acyclic orientation.
The last condition implies that, for every edge $ij$,
$0 \leq x_{i} - a_{i} < x_{j} - a_{j} < 1$ is equivalent to
$0 \leq y_{i} - b_{i} < y_{j} - b_{j} < 1$.
Consider an edge that is directed from $j$ to $i$.
Since $x$ and~$y$ both lie in the region~$R$,
there exists an integer~$k$ such that           
$x_{i} + k < x_{j} < x_{i} + k + 1$ 
and         
$y_{i} + k < y_{j} < y_{i} + k + 1$.
Now we have that        
$a_{j} - a_{i} < x_{j} - x_{i} < k+1$.          
Furthermore, observe that           
$x_{j} - a_{j} - 1 < 0 \leq x_{i} - a_{i}$.     
Hence       
$a_{j} - a_{i} > x_{j} - x_{i} - 1 > k-1$.      
Since $a_{j} - a_{i}$ is an integer, the two    
bounds implies that     
$a_{j} - a_{i} = k$.    
By similar reasoning we obtain that
$b_{j} - b_{i} = k$.    

Hence for every edge $ij$ we know that          
$a_{j} - a_{i} = b_{j} - b_{i}$.    
Since $a_{0} = b_{0} = 0$ and the graph $G$ is         
connected we obtain that $a_{i} = b_{i}$ for all
vertices $i$.
\end{proof} 

\begin{lemma}
The restriction $\varphi|_{R} : R \longrightarrow P_{0}$
is a poset homomorphism, that is, 
for two points $y$ and $z$ in the region $R$
such that $y \leq z$ the order relation
$\varphi(y) \leq \varphi(z)$ holds.
\end{lemma}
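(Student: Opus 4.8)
The plan is to translate the order on $R$---which, since $R$ is a sublattice of $\mathbb{R}^{n+1}$, is the coordinatewise order, so $y \le z$ means $y_i \le z_i$ for every $i$---into a firing sequence by sweeping along the segment from $y$ to $z$ and watching how $\varphi$ changes. Put $\gamma(t) = (1-t)\,y + t\,z$ for $t \in [0,1]$. Since each region of $C_{0}$ is convex and $y,z \in R$, the whole segment $\gamma([0,1])$ lies in $R$, and since $z - y$ has nonnegative entries, each coordinate function $\gamma_i$ is nondecreasing. By Lemma~\ref{lemma_inverse_image} the orientation $\varphi(x)$ of a point $x \in R$ depends only on the integer vector $(\lfloor x_0 \rfloor, \dots, \lfloor x_n \rfloor)$, because two points of $R$ with the same such vector lie in a common half-open lattice cube, hence in a common fibre of $\varphi|_R$. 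Therefore $\psi(t) := \varphi(\gamma(t))$ is a step function of $t$; as the floor function is right-continuous and the $\gamma_i$ are nondecreasing, $\psi$ is right-continuous, has only finitely many jumps, and is constant on each of the intervals into which those jumps cut $[0,1]$.

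The crux is to show that $\psi$ jumps only upward, and more precisely that at every jump time $t^{*}$ the value $\psi(t^{*})$ is obtained from $\lim_{t \to t^{*-}} \psi(t)$ by a firing sequence. Let $I$ be the set of coordinates $k$ such that $\gamma_k(t^{*})$ is an integer and $\gamma_k$ is strictly increasing near $t^{*}$---exactly the coordinates whose floor jumps at $t^{*}$. First, $I$ is an independent set of $G$ and no vertex of $I$ is a neighbor of the sink $0$: if $i$ and $j$ were adjacent with $\gamma_i(t^{*})$ and $\gamma_j(t^{*})$ both integers (and $\gamma_0 \equiv 0$ is always an integer), then $\gamma(t^{*})$ would satisfy $x_i = x_j + k$ for an integer $k$, so $\gamma(t^{*})$ would lie on a hyperplane of $\widetilde{\mathcal{H}}$, contradicting $\gamma(t^{*}) \in R \subseteq C$. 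Second, fix $k \in I$. For each neighbor $j$ of $k$ we have just seen that $\gamma_j(t^{*})$ is not an integer, so $\fracp{\gamma_j(t)}$ stays bounded away from $0$ and from $1$ for $t$ near $t^{*}$, while $\fracp{\gamma_k(t)} \to 1$ as $t \uparrow t^{*}$ and $\fracp{\gamma_k(t)} \to 0$ as $t \downarrow t^{*}$. Hence in the orientation $\lim_{t \to t^{*-}} \psi(t)$ every edge at $k$ points away from $k$, so $k$ is a source there, whereas in $\psi(t^{*})$ every edge at $k$ points toward $k$. For an edge incident to no vertex of $I$, neither endpoint's floor nor fractional part jumps at $t^{*}$, so its orientation, governed by the sign of the difference of the two fractional parts (which never vanishes, as $\gamma \subseteq C$), stays fixed across $t^{*}$. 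Thus $\psi(t^{*})$ is obtained from $\lim_{t \to t^{*-}} \psi(t)$ by reversing all edges at every vertex of $I$, that is, by firing the vertices of $I$ one by one in any order. This is a legitimate firing sequence in $P_{0}$: the vertices of $I$ are pairwise non-adjacent, so each is still a source when its turn comes, and none of them is $0$, so every intermediate orientation still has $0$ as a sink. Consequently $\lim_{t \to t^{*-}} \psi(t) \le \psi(t^{*})$.

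Chaining this inequality over the finitely many jump times of $\psi$, using that $\psi$ is constant between them and that the order on $P_{0}$ is transitive, gives $\varphi(y) = \psi(0) \le \psi(1) = \varphi(z)$, which is exactly what the lemma asserts.

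The main obstacle, I expect, is the local analysis at a jump time: pinning down $I$ as an independent set disjoint from the neighborhood of $0$---this is where the hypothesis $\gamma \subseteq R$, i.e.\ that the segment never meets $\widetilde{\mathcal{H}}$, does its work---and then checking that the orientation on the far side of $t^{*}$ is precisely the simultaneous firing of the vertices of $I$, each of which really is a source at the instant it fires.
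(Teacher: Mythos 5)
Your proof is correct and follows essentially the same route as the paper: sweep along the segment from $y$ to $z$, interpret each crossing of an integer hyperplane $x_i=p$ as firing the vertex $i$, and use the fact that the segment avoids $\widetilde{\mathcal{H}}$ to rule out simultaneous integer crossings at adjacent vertices. Your treatment is somewhat more careful than the paper's in spelling out that several pairwise non-adjacent vertices may cross integers at the same instant and can then be fired one by one in any order, but the underlying idea is identical.
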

\begin{proof}
Since the region~$R$ is convex, the line segment from $y$ to $z$ is 
contained in~$R$. 
Let a point $x$ move continuously from $y$ to $z$ along this line
segment
and consider what happens with the associated
acyclic orientations $\varphi(x)$. Note that each coordinate
$x_{i}$ is non-decreasing.
When the point $x$ crosses a hyperplane of the form $x_{i} = p$
where $p$ is an integer, observe that the value $\fracp{x_{i}}$
approaches $1$ and then jumps down to $0$. Hence the vertex~$i$ 
switches from being a source to being a sink, that is, the
vertex~$i$ fires.

Observe that two adjacent nodes $i$ and $j$ cannot fire at the same
time, since the intersection of the two hyperplanes
$x_{i} = p$ and $x_{j} = q$ is contained in the hyperplane
$x_{i} = x_{j} + (p-q)$ which is not in the region $R$.

Hence we obtain a firing sequence from the acyclic orientation
$\varphi(y)$ to $\varphi(z)$,
proving that $\varphi(y) \leq \varphi(z)$.
\end{proof}

\begin{lemma}
Let $x$ be a point in the region $R$.  Let $\Omega'$ be
an acyclic orientation comparable to
$\Omega = \varphi(x)$ in the poset $P_{0}$.
Then there exists a point $z$ in the region of $R$ as $x$
such that $\varphi(z) = \Omega'$.
\label{lemma_lifting}
\end{lemma}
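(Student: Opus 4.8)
The plan is to split into the two cases $\Omega \leq \Omega'$ and $\Omega' \leq \Omega$, and in each to lift a firing sequence to an honest path inside $R$: a firing of a vertex $i$ will be realized by pushing the coordinate $x_{i}$ across exactly one integer, so that $\fracp{x_{i}}$ climbs toward $1$ and then drops back to $0$ --- precisely the mechanism used in the proof that $\varphi|_{R}$ is a poset homomorphism. I describe the case $\Omega \leq \Omega'$; the other case is symmetric, using \emph{downward} pushes across integers to realize ``inverse firings'' (a sink turning back into a source).

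Suppose then that $\Omega \leq \Omega'$, and fix a firing sequence $F : [m] \longrightarrow [n]$ in $P_{0}$ with associated orientations $\Omega = \Omega_{1}, \ldots, \Omega_{m+1} = \Omega'$. First I would normalize the starting point. By Lemma~\ref{lemma_inverse_image} the fiber $\varphi|_{R}^{-1}(\Omega)$ equals $R \cap \left(\{0\} \times \prod_{i} [a_{i}, a_{i}+1)\right)$, and since $R$ is open I may move $x$ slightly inside this fiber so that each of the coordinates $x_{1}, \ldots, x_{n}$ becomes a non-integer; only $x_{0} = 0$ is forced to be an integer, and $0$ is never fired. Recall that a firing sequence in $P_{0}$ never fires a neighbor of $0$ (indeed $|F^{-1}(i)| \leq d(0,i) - 1$ from the Corollary, which is $0$ when $i$ is adjacent to $0$); hence every neighbor of a vertex that ever fires lies in $[n]$, and so has a non-integer coordinate.

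Next I would build points $x = x^{(0)}, x^{(1)}, \ldots, x^{(m)}$ of $R$, each with coordinates $x_{1}, \ldots, x_{n}$ non-integers and with $\varphi(x^{(t)}) = \Omega_{t+1}$. Given $x^{(t)}$, set $i = F(t+1)$, which is a source of $\Omega_{t+1}$, and let $x^{(t+1)}$ be obtained from $x^{(t)}$ by increasing the single coordinate $x_{i}$ to a value just above $\lfloor x^{(t)}_{i} \rfloor + 1$. Three things have to be checked: that the segment from $x^{(t)}$ to $x^{(t+1)}$ stays in $R$ --- the slice inequalities of $R$ coming from an edge $ij$ allow $x_{i}$ to grow up to $\lfloor x^{(t)}_{i}\rfloor + 1 + \fracp{x^{(t)}_{j}}$, and since $\fracp{x^{(t)}_{j}} > 0$ for every neighbor $j$ there is room to pass the integer; that along this segment no hyperplane of $\widetilde{\mathcal{H}}$ is crossed and exactly one integer hyperplane $x_{i} = \lfloor x^{(t)}_{i}\rfloor + 1$ is crossed, at which instant every edge at $i$ reorients toward $i$, i.e.\ the vertex $i$ fires, so that $\varphi(x^{(t+1)}) = \Omega_{t+2}$; and that the remaining coordinates are unchanged, so $x^{(t+1)}$ again has all coordinates in $[n]$ non-integer. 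Then $z := x^{(m)}$ lies in $R$ and satisfies $\varphi(z) = \Omega_{m+1} = \Omega'$.

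The main obstacle is exactly this last paragraph: showing that a single-coordinate push stays inside $R$ while crossing one integer hyperplane and no hyperplane of $\widetilde{\mathcal{H}}$. This is where normalizing to non-integer coordinates is needed (in the upward direction) and where the fact that neighbors of $0$ never fire is used. I would isolate, as a small preliminary observation, the estimate that from a point of $R$ at which $i$ is a source the coordinate $x_{i}$ can be increased past the next integer without leaving $R$ provided every neighbor of $i$ has a non-integer coordinate, together with its downward counterpart --- which, because the fractional part lives in the half-open interval $[0,1)$, needs no such hypothesis --- after which the two inductions are routine.
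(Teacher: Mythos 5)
Your proof is correct and follows essentially the same route as the paper: each firing is realized geometrically by pushing the fired coordinate across the next integer, after a small perturbation ensuring every relevant fractional part is strictly positive (the paper's step $y = x + \lambda \cdot (0,1,\ldots,1)$). The only difference is organizational --- the paper reduces to cover relations and re-perturbs before each single firing, whereas you perturb once and carry the non-integrality invariant through an induction over the whole firing sequence, with your explicit bound $x_{i} < \lfloor x_{i} \rfloor + 1 + \fracp{x_{j}}$ and the remark that neighbors of $0$ never fire making explicit what the paper leaves implicit.
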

\begin{proof}
It is enough to prove this for cover relations in the poset $P$.
We begin by considering the case when $\Omega'$ covers
$\Omega$ in $P$.  Thus $\Omega'$ is obtained from $\Omega$ by firing
a vertex $i$.

First pick a positive real number $\lambda$ such that
$\fracp{x_{j}} < 1 - \lambda$ for each nonzero vertex~$j$.
Let $y$ be the point $y = x + \lambda \cdot (0,1, \ldots, 1)$.
Observe that $y$ belongs to the same region $R$ and that
$\varphi$ maps $y$ to the same acyclic orientation as the point $x$.

Since $i$ is a source in $\Omega$, the value
$\fracp{y_{i}}$ is larger than any other 
value 
$\fracp{y_{j}}$ for vertexes~$j$ adjacent to the vertex $i$.
Let $z$ be the point with coordinates
$z_{j} = y_{j}$ for $j \neq i$ and
$z_{i} = \lceil y_{i} \rceil + \lambda/2$.
Observe that moving from $y$ to the point $z$
we do not cross any hyperplanes of the form $x_{i} = x_{j} + k$.
Hence the point $z$ also belongs to region $R$.

However, we did cross a hyperplane of the form $x_{i} = p$,
corresponding to firing the vertex~$i$. Hence we have that
$\varphi(z) = \Omega'$.  Now we can iterate this argument to extend
to the general case when $\Omega < \Omega'$.

The case when $\Omega'$ is covered by $\Omega$ is done
similarly. However this case is easier since one can skip
the middle step of defining the point $y$. Hence this case
is omitted.
\end{proof}

A connected component of a finite poset is a weakly connected
component of its associated comparability graph.
That is, a finite poset is the disjoint union
of its connected components.

\begin{lemma}
Let $Q$ be a connected component
of the poset of acyclic orientations~$P_{0}$.
Then there exists a region $R$ in $C_{0}$ such that
the map $\varphi$ maps $R$ onto the component $Q$.
\label{lemma_lifting_components}
\end{lemma}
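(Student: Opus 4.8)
The plan is to show that $\varphi$ restricted to a single region $R$ of $C_0$ has image exactly a connected component $Q$, and that every component arises this way. The key facts already available are: $\varphi|_R$ is a poset homomorphism (previous lemma), and the lifting lemma (Lemma~\ref{lemma_lifting}) which says that if $\Omega = \varphi(x)$ for $x \in R$ and $\Omega'$ is comparable to $\Omega$, then $\Omega'$ is also hit by $\varphi$ on the same region $R$. Combined, these two facts essentially say that the image $\varphi(R)$ is "closed under comparability" inside $P_0$, hence is a union of connected components; and since $R$ is connected (it is convex) and $\varphi|_R$ is a poset homomorphism, the image is connected, hence is a single component.

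**First I would** fix a connected component $Q$ and pick any acyclic orientation $\Omega \in Q$. Since $\varphi : C_0 \to P_0$ is defined on all of $C_0$ and is visibly surjective (given an orientation, one can construct a point realizing it — or more cheaply, just take $\Omega = \varphi(x)$ for some $x$, which exists because every region maps somewhere and every orientation in $P_0$ is in the image of $\varphi$ on $C_0$), choose a point $x \in C_0$ with $\varphi(x) = \Omega$, and let $R$ be the region of $C_0$ containing $x$. I claim $\varphi(R) = Q$.

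**Next,** for the inclusion $\varphi(R) \subseteq Q$: take any $z \in R$. Since $R$ is convex, the segment from $x$ to $z$ lies in $R$, but the segment need not be monotone, so I cannot directly invoke the poset-homomorphism lemma. Instead I would argue that $\varphi(z)$ is weakly connected to $\varphi(x)$ in the comparability graph of $P_0$ by a different route: move from $x$ to $z$ one coordinate at a time, or better, use the meet $x \meet z \in R$ (the region is a sublattice by an earlier lemma). Then $x \meet z \le x$ and $x \meet z \le z$ coordinatewise, so by the poset-homomorphism lemma $\varphi(x \meet z) \le \varphi(x)$ and $\varphi(x \meet z) \le \varphi(z)$. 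Hence $\varphi(z)$ is connected to $\varphi(x) = \Omega$ in $P_0$, so $\varphi(z) \in Q$. For the reverse inclusion $Q \subseteq \varphi(R)$: let $\Omega' \in Q$ be arbitrary. Since $Q$ is connected, there is a path $\Omega = \Theta_0, \Theta_1, \ldots, \Theta_k = \Omega'$ in the comparability graph, with consecutive $\Theta_\ell, \Theta_{\ell+1}$ comparable. Apply Lemma~\ref{lemma_lifting} repeatedly: $\Theta_0 = \varphi(x)$ with $x \in R$; since $\Theta_1$ is comparable to $\Theta_0$, there is $x_1 \in R$ with $\varphi(x_1) = \Theta_1$; then since $\Theta_2$ is comparable to $\Theta_1 = \varphi(x_1)$, there is $x_2 \in R$ with $\varphi(x_2) = \Theta_2$; and so on. After $k$ steps we obtain $x_k \in R$ with $\varphi(x_k) = \Omega'$, so $\Omega' \in \varphi(R)$.

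**The main obstacle** I anticipate is the $\varphi(R) \subseteq Q$ direction, precisely because the straight segment in $R$ from $x$ to $z$ is not coordinatewise monotone, so the poset-homomorphism lemma does not apply verbatim. The meet trick above resolves this cleanly using that $R$ is a sublattice; alternatively one could join: $x \join z \in R$ with $\varphi(x) \le \varphi(x \join z) \ge \varphi(z)$, which works equally well. One should also double-check surjectivity of $\varphi : C_0 \to P_0$ to guarantee the starting point $x$ exists — this is immediate since every acyclic orientation with sink $0$ can be realized, e.g.\ by assigning $x_i$ according to the position of vertex $i$ in some linear extension of the orientation (scaled into $[0,1)$), which yields a point of $C_0$ mapping to it. Having both inclusions, $\varphi$ maps $R$ onto $Q$, completing the proof.
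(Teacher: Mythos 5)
Your proof is correct, and its core --- lifting a path $\Omega = \Theta_0, \ldots, \Theta_k = \Omega'$ in the comparability graph of $Q$ step by step via Lemma~\ref{lemma_lifting} --- is exactly the paper's argument. You go slightly further than the paper in two useful ways: you justify the surjectivity of $\varphi$ (which the paper asserts without proof), and you prove the reverse containment $\varphi(R) \subseteq Q$ via the meet $x \meet z \in R$ together with the poset-homomorphism lemma, a containment the paper does not address even though it is implicitly used later when $\varphi|_{R}$ is written as a map $R \longrightarrow Q$.
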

\begin{proof}
Let $\Omega$ be an orientation in the component $Q$.
Since $\varphi$ is surjective we can lift~$\Omega$ to a point
$x$ in $C_{0}$. Say that the point $x$ lies in the region $R$.
It is enough to show that every orientation
$\Omega^{\prime}$ in $Q$ can be lifted
to a point in $R$. The two orientations
$\Omega$ and $\Omega^{\prime}$ are related by a sequence in $Q$
of orientations 
$\Omega = \Omega_{1}, \Omega_{2}, \ldots, \Omega_{k} = \Omega^{\prime}$
such that $\Omega_{i}$ and $\Omega_{i+1}$ are comparable.
By iterating 
Lemma~\ref{lemma_lifting}
we obtain points~$x_{i}$ in~$R$
such that $\varphi(x_{i}) = \Omega_{i}$.  In particular, 
$\varphi(x_{k}) = \Omega^{\prime}$.
\end{proof}

\begin{proposition}
Let $Q$ be a connected component of the poset of
acyclic orientations $P_{0}$. Then the component $Q$ as a poset is a lattice.
Moreover, let $R$ be a region of $C_{0}$ that maps onto $Q$ by $\varphi$.
Then the poset map $\varphi|_{R} : R \longrightarrow Q$ is
a lattice homomorphism.
\end{proposition}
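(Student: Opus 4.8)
The plan is to push the lattice structure of $R$ down to $Q$ along the surjection $\varphi|_{R}$, using three facts already at hand: that each region of $C_{0}$ is a distributive sublattice of $\mathbb{R}^{n+1}$, that $\varphi|_{R}$ is order-preserving, and that every fiber $(\varphi|_{R})^{-1}(\Omega')$ is a sublattice of $\mathbb{R}^{n+1}$ (Lemma~\ref{lemma_inverse_image}). The one extra ingredient I will want is a \emph{monotone} refinement of Lemma~\ref{lemma_lifting}: if $\Omega \leq \Omega'$ in $P_{0}$ and $x \in R$ with $\varphi(x) = \Omega$, then there is a point $z \in R$ satisfying $\varphi(z) = \Omega'$ and $x \leq z$ coordinatewise; dually, if $\Omega' \leq \Omega$ then $z$ may be taken with $z \leq x$. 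This is visible in the proof of Lemma~\ref{lemma_lifting}: a single firing there replaces $x$ first by $y = x + \lambda \cdot (0, 1, \ldots, 1) \geq x$ and then by a point agreeing with $y$ off the fired vertex $i$ whose $i$-th coordinate is $\lceil y_{i} \rceil + \lambda / 2 > x_{i}$, so the new point dominates $x$; domination persists upon iterating along a firing sequence, and the reverse-firing case is symmetric.

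Now fix $\Omega_{1}, \Omega_{2} \in Q$ and pick lifts $x_{1}, x_{2} \in R$ with $\varphi(x_{k}) = \Omega_{k}$, which exist since $\varphi$ maps $R$ onto $Q$. Because $R$ is a sublattice of $\mathbb{R}^{n+1}$, the points $x_{1} \meet x_{2}$ and $x_{1} \join x_{2}$ lie in $R$, and since their images are comparable to $\Omega_{1}$ they lie in the component $Q$. I claim $\varphi(x_{1} \meet x_{2})$ is the meet of $\Omega_{1}$ and $\Omega_{2}$ in $Q$. It is a common lower bound because $x_{1} \meet x_{2} \leq x_{1}, x_{2}$ and $\varphi|_{R}$ is order-preserving. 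To see it is the greatest, let $\Omega'$ be any common lower bound of $\Omega_{1}$ and $\Omega_{2}$. Applying the monotone lifting statement to $\Omega' \leq \Omega_{1}$ and to $\Omega' \leq \Omega_{2}$ yields $z_{1}, z_{2} \in R$ with $\varphi(z_{1}) = \varphi(z_{2}) = \Omega'$, $z_{1} \leq x_{1}$, and $z_{2} \leq x_{2}$. By Lemma~\ref{lemma_inverse_image} the fiber $(\varphi|_{R})^{-1}(\Omega')$ is a sublattice, so $z_{1} \meet z_{2}$ again maps to $\Omega'$; since $z_{1} \meet z_{2} \leq x_{1}$ and $z_{1} \meet z_{2} \leq x_{2}$ we get $z_{1} \meet z_{2} \leq x_{1} \meet x_{2}$, hence $\Omega' = \varphi(z_{1} \meet z_{2}) \leq \varphi(x_{1} \meet x_{2})$. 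Therefore $\Omega_{1} \meet \Omega_{2}$ exists in $Q$ and equals $\varphi(x_{1} \meet x_{2})$.

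The dual argument, using the upward monotone lifting and the fact that each fiber is closed under $\join$, shows $\Omega_{1} \join \Omega_{2}$ exists in $Q$ and equals $\varphi(x_{1} \join x_{2})$. So every pair in $Q$ has a meet and a join; being finite, $Q$ is then a lattice, and the two identities say exactly that $\varphi|_{R}$ preserves meets and joins, i.e.\ is a lattice homomorphism. I expect the greatest-lower-bound step to be the crux: a common lower bound $\Omega'$ a priori has lifts scattered through $R$ with no relation to $x_{1}$ or $x_{2}$, and the point of the argument is to use monotone lifting to place one lift of $\Omega'$ below $x_{1}$ and another below $x_{2}$, and then to use Lemma~\ref{lemma_inverse_image} to amalgamate these two lifts into a single lift of $\Omega'$ lying below $x_{1} \meet x_{2}$. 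Everything else is routine.
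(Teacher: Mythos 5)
Your proof is correct and follows essentially the same route as the paper: lift the two orientations, take the meet of the lifts in the sublattice $R$, and for any common lower bound use the (monotone) lifting lemma to place one lift below each of $x_{1}$ and $x_{2}$, then amalgamate them via the fiber-sublattice property of Lemma~\ref{lemma_inverse_image}. Your explicit remark that Lemma~\ref{lemma_lifting} must be read in its monotone form ($z \leq x$ or $z \geq x$ according to the direction of comparability) is a point the paper uses without stating, and your justification of it from that lemma's proof is sound.
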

\begin{proof}
The previous discussion showed that we can lift the component
$Q$ to a region~$R$. Consider two acyclic orientations
$\Omega$ and $\Omega'$. We can lift them to two points~$x$ and~$y$
in~$R$, that is,
$\varphi(x) = \Omega$
and
$\varphi(y) = \Omega'$.
Since $\varphi|_{R}$ is a poset map we obtain that
$\varphi(x \meet y)$ is a lower bound for
$\Omega$ and $\Omega'$. It remains to show that the lower bound
is unique. 

Assume that $\Omega''$ is a lower bound of $\Omega$ and $\Omega'$.
By Lemma~\ref{lemma_lifting}
we can lift $\Omega''$ to an element~$z$ in $R$ such
that $z \leq x$.
Similarly,
we can lift $\Omega''$ to an element $w$ in $R$ such
that $w \leq y$.
That is we have that $\varphi(z) = \varphi(w) = \Omega''$.
Now by Lemma~\ref{lemma_inverse_image}
we have that $\varphi(z \meet w) = \Omega''$.
But since $z \meet w$ is a lower bound of both $x$ and $y$
we have that $z \meet w \leq x \meet y$.
Now applying $\varphi$ we obtain that
$\varphi(x \meet y)$ is the greatest lower bound, proving that
the meet is well-defined.
A dual argument shows that the join is well-defined, hence
$Q$ is a lattice.

Finally, we have to show that $\varphi|_{R}$ is
a lattice homomorphism.
Let $x$ and $y$ be two points in the region $R$.
By Lemma~\ref{lemma_lifting} we can lift
the inequality $\varphi(x) \meet \varphi(y) \leq \varphi(x)$
to obtain a point $z$ in $R$ such that
$z \leq x$ and $\varphi(z) = \varphi(x) \meet \varphi(y)$.
Similarly, we can lift
the inequality $\varphi(x) \meet \varphi(y) \leq \varphi(y)$
to obtain a point $w$ in $R$ such that
$w \leq y$ and $\varphi(w) = \varphi(x) \meet \varphi(y)$.
By Lemma~\ref{lemma_inverse_image} we know that
$\varphi(z \meet w) = \varphi(x) \meet \varphi(y)$.
But~$z \meet w$ is a lower bound of both $x$ and~$y$,
so
$\varphi(x) \meet \varphi(y) = \varphi(z \meet w) \leq \varphi(x \meet y)$.
But since 
$\varphi(x \meet y)$ is a lower bound of both
$\varphi(x)$ and $\varphi(y)$ we have
$\varphi(x \meet y) \leq \varphi(x) \meet \varphi(y)$.
Thus the map $\varphi|_{R}$ preserves the meet operation.
The dual argument proves that 
$\varphi|_{R}$ preserves the join operation,
proving that it is a lattice homomorphism.
\end{proof}

Combining these results we can now prove the result of
Propp~\cite{Propp}.

\begin{theorem}
Each connected component of the poset of acyclic orientations $P_{0}$
is a distributive lattice.
\end{theorem}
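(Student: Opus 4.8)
The plan is to combine the preceding Proposition with the fact, established in the lemma on regions, that each region $R$ is a distributive sublattice of $\mathbb{R}^{n+1}$. By Lemma~\ref{lemma_lifting_components}, every connected component $Q$ of $P_{0}$ is the image $\varphi(R)$ of some region $R$ in $C_{0}$, and by the Proposition the restriction $\varphi|_{R} : R \longrightarrow Q$ is a surjective lattice homomorphism and $Q$ is itself a lattice. Since the distributive law is an identity between the lattice operations, it is inherited by homomorphic images; applying this to the surjection $\varphi|_{R}$ from the distributive lattice $R$ onto $Q$ shows that $Q$ is distributive.

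Concretely, I would argue as follows. Fix a connected component $Q$ and choose, via Lemma~\ref{lemma_lifting_components}, a region $R$ with $\varphi(R) = Q$. Given three orientations $\Omega_{1}, \Omega_{2}, \Omega_{3}$ in $Q$, lift them to points $x_{1}, x_{2}, x_{3}$ in $R$ with $\varphi(x_{k}) = \Omega_{k}$. Because $R$ is a distributive sublattice of $\mathbb{R}^{n+1}$, the elements $x_{2} \join x_{3}$ and $x_{1} \meet (x_{2} \join x_{3})$ again lie in $R$ and
$$
  x_{1} \meet (x_{2} \join x_{3}) = (x_{1} \meet x_{2}) \join (x_{1} \meet x_{3}) .
$$
Applying $\varphi|_{R}$ and using that it preserves both meet and join yields
$$
  \Omega_{1} \meet (\Omega_{2} \join \Omega_{3}) = (\Omega_{1} \meet \Omega_{2}) \join (\Omega_{1} \meet \Omega_{3}) ,
$$
which is the distributive law in $Q$. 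Hence every connected component of $P_{0}$ is a distributive lattice, and since a finite poset is the disjoint union of its connected components, this is exactly Propp's theorem.

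Essentially all of the real work has been done in the earlier lemmas; the remaining step is the purely formal observation that distributivity passes to homomorphic images. The only points that require a moment of care are that $\varphi|_{R}$ is genuinely a lattice homomorphism \emph{onto} $Q$ — so that both surjectivity (Lemma~\ref{lemma_lifting_components}) and preservation of meet and join (the Proposition) are invoked — and that $Q$ has already been shown to be a lattice, so that the displayed identity is meaningful there. I do not expect any genuine obstacle in this final step.
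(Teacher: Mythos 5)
Your proposal is correct and follows exactly the same route as the paper: both deduce the theorem from the facts that each region $R$ is a distributive sublattice of $\mathbb{R}^{n+1}$, that $\varphi|_{R}$ is a surjective lattice homomorphism onto the component $Q$, and that distributivity, being an identity in the lattice operations, is inherited by homomorphic images. Your explicit verification with three lifted points merely spells out this last standard fact.
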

\begin{proof}
It is enough to recall that $\mathbb{R}^{n+1}$ is a distributive lattice
and each region $R$ is a sublattice. Furthermore, the image
under a lattice morphism of a distributive lattice is also distributive.
\end{proof}

Observe that the minimal element in each connected component $Q$
is an acyclic orientation with the unique sink at the vertex $0$.
Greene and Zaslavsky~\cite{Greene_Zaslavsky}
proved that the number of such orientations
is given by
the sign $-1$ to the power one less than the number of vertices
times
the linear coefficient in the chromatic polynomial
of the graph $G$.
Gebhard and Sagan gave several proofs of this result~\cite{Gebhard_Sagan}.
A geometric proof of this result can be found 
in~\cite{Ehrenborg_Readdy_Slone}, 
where the authors
view the graphical hyperplane arrangement on a torus
and count the regions on the torus.

That the connected components are confluent,
that is, each pair of elements has a lower and an upper bound,
can also be shown by analyzing 
chip-firing games~\cite{Bjorner_Lovasz_Shor}.
Is there a geometric way to prove
the confluency of chip-firing?
More discussions relating these distributive lattice
with chip-firing can be found in~\cite{Latapy_Magnien,Latapy_Phan}.

\section*{Acknowledgments}

The authors were partially supported by
National Security Agency grant H98230-06-1-0072.
The authors thank Andrew Klapper and Margaret Readdy
for their comments
on an earlier version of this paper.

{\small

}

\bigskip

{\em R.\ Ehrenborg and M.\ Slone,
Department of Mathematics,
University of Kentucky,
Lexington, KY 40506-0027,}
\{{\tt jrge},{\tt mslone}\}{\tt @ms.uky.edu}

\end{document}